\title[A plane model with two Galois points]{A criterion for the existence of a plane model with two inner Galois points for algebraic curves}
\author{Kazuki Higashine}
\subjclass[2010]{14H05, 14H37, 14H50}
\keywords{automorphism group,  Galois group, Galois point, plane curve}
\thanks{The author was partially supported by Grant-in-Aid for JSPS Research Fellows (No. 20J12384).}
\address[Kazuki Higashine]{Graduate School of Science and Engineering, Yamagata University, Kojirakawa-machi 1-4-12, Yamagata 990-8560, Japan}
\email{s182102d@st.yamagata-u.ac.jp}
\newtheorem{theorem}{Theorem}
\newtheorem{proposition}[theorem]{Proposition}
\newtheorem{lemma}[theorem]{Lemma}
\newtheorem{fact}[theorem]{Fact}
\theoremstyle{definition}
\newtheorem{remark}[theorem]{Remark}
\begin{document}
\begin{abstract}
A criterion for the existence of a plane model with two non-smooth Galois points for algebraic curves is presented, which is a generalization of Fukasawa's criterion for two smooth Galois points. Owing to this generalized criterion, multiplicities and order sequences at Galois points can be described in detail.
\end{abstract}
\maketitle

\section{Introduction}
Let $k$ be an algebraically closed field of characteristic $p\geq0$, and let $C\subset \mathbb{P}^{2}$ be an irreducible (possibly singular) plane curve of degree $d\geq 4$ over $k$. For points $P$, $Q\in \mathbb{P}^{2}$ with $P\not= Q$, the line passing through $P$, $Q$ is denoted by $\overline{PQ}$. We consider the projection $\pi_{P}:C\dashrightarrow \mathbb{P}^{1}; Q\mapsto \overline{PQ}$ with the center $P\in \mathbb{P}^2$. If the field extension $k(C)/\pi_{P}^{\ast}k(\mathbb{P}^{1})$ of function fields induced by $\pi_{P}$ is Galois, then $P$ is called a Galois point for $C$. This notion was introduced in 1996 by Yoshihara (see \cite{FukasawaDedicata2009, MiuraYoshiharaAlgebra2000, YoshiharaAlgebra2001}). For a Galois point $P$, the associated Galois group $G_{P}={\rm Gal}(k(C)/\pi_{P}^{\ast}k(\mathbb{P}^{1}))$ is called a Galois group at $P$. Furthermore, a Galois point $P$ is called a smooth Galois point (resp. a non-smooth Galois point, an inner Galois point, an outer Galois point) if $P$ is a smooth point of $C$ (resp. a singular point of $C$, a point contained in $C$, a point not contained in $C$), after \cite{MiuraAustral.2000, MiuraAlgebra2005, TakahashiNihonkai2005}. 

In 2016, Fukasawa \cite{FukasawaAlgebra2018} presented a criterion for the existence of a birational embedding of a smooth projective curve into a projective plane with two smooth Galois points and obtained new examples of plane curves with two smooth Galois points by using this criterion. On the other hand, there have been some known examples of plane curves with two or more non-smooth Galois points. For example, the Ballico-Hefez curve (\cite[Theorem 1]{FukasawaFinite2013}), some self-dual curves (\cite[Theorem 17]{HayashiYoshiharaGeometry2013}), the (plane model of) Giulietti-Korchm\'{a}ros curve (\cite[Theorem 2]{FukasawaHigashineFinite2019}), the ($q^{3}, q^{2}$)-Frobenius nonclassical curve (\cite[Theorem 1]{BorgesFukasawaFinite2020}), and the Artin-Schreier-Mumford curve (proof of \cite[Theorem 1]{FukasawaarXiv2005.10073}). However, only few research studies have focused on non-smooth Galois points. Takahashi \cite{TakahashiNihonkai2005} studied a plane quintic curve with a double-point $P$ and determined defining equations when $P$ is a Galois point. This is the only study that focused on a non-smooth Galois point known so far. In order to study non-smooth Galois points systematically, it is good to have a criterion for non-smooth Galois points. 

In this article, we extend Fukasawa's criterion \cite[Theorem 1]{FukasawaAlgebra2018} to all cases with two (possibly non-smooth) Galois points. Let $X$ be a (reduced irreducible) smooth projective curve over $k$, and let $k(X)$ be its function field. The full automorphism group of $X$ is denoted by ${\rm Aut}(X)$. For a finite subgroup $G\subset {\rm Aut}(X)$ and a point $P\in X$, the stabilizer of $P$ in $G$ (resp. the orbit of $P$ under $G$) is denoted by $G(P)$ (resp. $G\cdot P$). Furthermore, the quotient curve of $X$ by $G$, that is, the smooth projective curve corresponding to the fixed field of $k(X)$ by $G$, is denoted by $X/G$. The following theorem is the main theorem.

\begin{theorem}\label{theorem}
Let $G_{1}, G_{2}$ be finite subgroups of ${\rm Aut}(X)$ and let $P_{1}, P_{2}$ be different points of $X$. We put $\mathbb{O} =(G_{1}\cdot P_{2})\cup (G_{2}\cdot P_{1})$. We consider the two divisors on $X;$
\begin{itemize}
\item ${\rm Bs}_{P_{1}}=|G_{2}(P_{1})|\sum_{Q\in \mathbb{O}\setminus G_{1}\cdot P_{2}}Q+(|G_{2}(P_{1})|-|G_{1}(P_{2})|)\sum_{R\in G_{1}\cdot P_{2}\cap G_{2}\cdot P_{1}}R$,
\item ${\rm Bs}_{P_{2}}=|G_{1}(P_{2})|\sum_{S\in \mathbb{O}\setminus G_{2}\cdot P_{1}}S$.
\end{itemize}
Then the four conditions
\begin{itemize}
\item[(a)] $X/G_{1}\cong \mathbb{P}^{1}, X/G_{2}\cong \mathbb{P}^{1}$,
\item[(b)] $G_{1}\cap G_{2}=\{ 1\}$,
\item[(c)] ${\rm Bs}_{P_{1}}\geq P_{1}, {\rm Bs}_{P_{2}}\geq P_{2}$, and
\item[(d)] ${\rm Bs}_{P_{1}}+\sum_{\sigma \in G_{1}}\sigma (P_{2})={\rm Bs}_{P_{2}}+\sum_{\tau \in G_{2}}\tau (P_{1})$,
\end{itemize}
are satidfied, if and only if there exists a birational embedding $\varphi :X\to \mathbb{P}^{2}$ of degree $|G_{1}|+{\rm deg}({\rm Bs}_{P_{1}})=|G_{2}|+{\rm deg}({\rm Bs}_{P_{2}})$ such that $\varphi (P_{1})$, $\varphi (P_{2})$ are different inner Galois points for $\varphi (X)$, and $G_{\varphi (P_{1})}, G_{\varphi (P_{2})}$ coincide with $G_{1}, G_{2}$ respectively, and $\overline{\varphi(P_{1})\varphi(P_{2})}$ is not a tangent line at $\varphi (P_{2})$.
\end{theorem}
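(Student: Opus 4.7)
The plan is to prove both directions around the divisor identity
\[
\varphi^{\ast}\overline{\varphi(P_{1})\varphi(P_{2})} \;=\; \sum_{\sigma \in G_{1}}\sigma(P_{2}) + {\rm Bs}_{P_{1}} \;=\; \sum_{\tau \in G_{2}}\tau(P_{1}) + {\rm Bs}_{P_{2}},
\]
which expresses the pullback of the line joining the two Galois points in two ways using the two pencils of lines through $\varphi(P_{1})$ and $\varphi(P_{2})$ respectively. Condition (d) is exactly this identity, while (a), (b), (c) encode, respectively, the Galois structure, the birationality of the combined map $(f_{1},f_{2}) : X \to \mathbb{P}^{1}\times\mathbb{P}^{1}$ with $f_{i} = \pi_{\varphi(P_{i})}\circ\varphi$, and the fact that each $\varphi(P_{i})$ actually lies on the image curve.

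For the necessity direction, (a) is immediate from the definition of a Galois point, and (b) follows by noticing that a generic $Q\in X$ is recovered from the intersection of the lines $\overline{\varphi(P_{1})\varphi(Q)}$ and $\overline{\varphi(P_{2})\varphi(Q)}$, so $(f_{1},f_{2})$ is birational and any element of $G_{1}\cap G_{2}$ is forced to be the identity. For (d), I would pull back the line $L=\overline{\varphi(P_{1})\varphi(P_{2})}$ through the $G_{1}$-pencil: this gives $\varphi^{\ast}L=f_{1}^{\ast}(f_{1}(P_{2}))+F_{1}$, where $f_{1}^{\ast}(f_{1}(P_{2}))=\sum_{\sigma}\sigma(P_{2})$ is the Galois fiber divisor and $F_{1}$ is the fixed base divisor of the pencil; a symmetric analysis with the $G_{2}$-pencil yields the other expression, and the equality $F_{i}={\rm Bs}_{P_{i}}$ is then verified by a point-by-point multiplicity computation at each point of $\mathbb{O}$. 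The coefficients $|G_{j}(P_{i})|$ arise as ramification indices of $f_{j}$ along the orbit $G_{j}\cdot P_{i}$, and the overlap correction $|G_{2}(P_{1})|-|G_{1}(P_{2})|$ on $G_{1}\cdot P_{2}\cap G_{2}\cdot P_{1}$ compensates for the simultaneous contribution of both pencils at shared points. Condition (c) then reflects the inner property: since $P_{i}\in G_{j}\cdot P_{i}\subseteq\mathbb{O}$, the point $P_{i}$ appears in ${\rm Bs}_{P_{i}}$ with nonnegative coefficient.

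For the sufficiency direction, I would construct $\varphi$ from a three-dimensional linear subsystem. Using (a), choose coordinates on $X/G_{i}\cong\mathbb{P}^{1}$ so that the two Galois maps give functions $x_{i}\in k(X)$ with $f_{i}(P_{j})=\infty$. Set $D_{0}=\sum_{\sigma}\sigma(P_{2})+{\rm Bs}_{P_{1}}$, equal to $\sum_{\tau}\tau(P_{1})+{\rm Bs}_{P_{2}}$ by (d); then $1,x_{1},x_{2}\in H^{0}(X,\mathcal{O}_{X}(D_{0}))$, and condition (b) ensures these are linearly independent, giving a three-dimensional subspace $V\subseteq H^{0}(X,\mathcal{O}_{X}(D_{0}))$. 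The induced rational map $\varphi=[1:x_{1}:x_{2}]:X\to\mathbb{P}^{2}$ sends $P_{1}\mapsto[0:0:1]$ and $P_{2}\mapsto[0:1:0]$, and projecting from these image points recovers $f_{1}$ and $f_{2}$, so each image is a Galois point with the prescribed group. Condition (c) forces $P_{i}$ to be a base point of the $i$-th pencil of lines, ensuring the Galois points are inner.

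The main obstacle will be the sufficiency direction: verifying simultaneously that $\varphi$ is a birational morphism of degree $|G_{i}|+\deg({\rm Bs}_{P_{i}})$, that each $\varphi(P_{i})$ is an \emph{inner} Galois point with Galois group exactly $G_{i}$ (not a proper subgroup or supergroup), and that $\overline{\varphi(P_{1})\varphi(P_{2})}$ is non-tangent at $\varphi(P_{2})$. The non-tangency condition is the geometric counterpart of the asymmetric placement of the overlap correction inside ${\rm Bs}_{P_{1}}$ rather than ${\rm Bs}_{P_{2}}$; matching this asymmetry to the actual intersection multiplicities on $\varphi(X)$ at each point of $\mathbb{O}$ requires careful case analysis depending on whether the point lies over $\varphi(P_{1})$, $\varphi(P_{2})$, neither, or both. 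This case-by-case bookkeeping is where the non-smooth generalisation genuinely departs from Fukasawa's smooth-point argument.
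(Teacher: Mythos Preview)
Your proposal is correct and follows essentially the same route as the paper: both directions hinge on the identity $\varphi^{\ast}\overline{\varphi(P_{1})\varphi(P_{2})}={\rm Bs}_{P_{i}}+\sum\sigma(P_{3-i})$, with sufficiency constructing $\varphi$ from fixed-field generators $f,g$ and necessity recovering ${\rm Bs}_{P_{i}}$ as the base divisor of the $i$-th pencil via exactly the order-sequence case analysis you anticipate (the paper splits into three cases according to whether $L$ is tangent at $\varphi(P_{1})$ and whether $L$ meets $\varphi(X)$ outside $\{\varphi(P_{1}),\varphi(P_{2})\}$). One refinement worth noting: condition (c) is not really about inner-ness (that is automatic once $\varphi(P_{i})$ is defined, since $P_{i}\in X$); in the sufficiency direction the paper uses (c) to show that $\varphi(P_{1})=(0{:}1{:}0)$ and $\varphi(P_{2})=(1{:}0{:}0)$ are distinct coordinate points (the inequality $|G_{2}(P_{1})|>|G_{1}(P_{2})|$ forces the pole of $g$ to dominate at $P_{1}$, and ${\rm Bs}_{P_{2}}\geq P_{2}$ forces $P_{2}\notin G_{2}\cdot P_{1}$), while in the necessity direction (c) falls out of the identification $F_{i}={\rm Bs}_{P_{i}}$ because the second order $\alpha_{P_{i}}$ is strictly positive, not merely nonnegative.
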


By Theorem \ref{theorem} and its proof, we have the following.

\begin{proposition}\label{caution2}
Let $\varphi$ be as in Theorem \ref{theorem}, and $\Lambda$ be the linear system on $X$ corresponding to the morphism $\varphi$. Then the following hold.
\begin{itemize}
\item[(1)] The multiplicity $m_{\varphi (P_{1})}$ of $\varphi (X)$ at $\varphi (P_{1})$ is equal to $|G_{2}(P_{1})|\cdot |\mathbb{O}\setminus G_{1}\cdot P_{2}|+(|G_{2}(P_{1})|-|G_{1}(P_{2}|)\cdot |G_{1}\cdot P_{2}\cap G_{2}\cdot P_{1}|$.
\item[(2)] The multiplicity $m_{\varphi (P_{2})}$ of $\varphi (X)$ at $\varphi (P_{2})$ is equal to $|G_{1}(P_{2})|\cdot |\mathbb{O}\setminus G_{2}\cdot P_{1}|$.
\item[(3)] If $P\in \varphi^{-1}(\varphi (P_{1}))$ and $\overline{\varphi(P_{1})\varphi(P_{2})}$ is not the osculating line at $P$, then the second $(\Lambda, P)$-order coincides with $|G_{2}(P_{1})|$.
\item[(4)] If $P\in \varphi^{-1}(\varphi (P_{1}))$ and $\overline{\varphi(P_{1})\varphi(P_{2})}$ is the osculating line at $P$, then the second $(\Lambda, P)$-order coincides with $|G_{2}(P_{1})|-|G_{1}(P_{2})|$, and the third $(\Lambda, P)$-order coincides with $|G_{2}(P_{1})|$.
\item[(5)] If $P\in \varphi^{-1}(\varphi (P_{2}))$, then the second $(\Lambda, P)$-order coincides with $|G_{1}(P_{2})|$.
\end{itemize}
\end{proposition}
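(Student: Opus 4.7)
I would begin by extracting from the proof of Theorem~\ref{theorem} the geometric dictionary: the divisor $\text{Bs}_{P_i}$ is the fixed part of the sub-pencil of $\Lambda$ cut out by lines through $\varphi(P_i)$, so $\text{Bs}_{P_i} = \varphi^{\ast}(\varphi(P_i))$ as divisors on $X$. In particular its support is $\varphi^{-1}(\varphi(P_i))$ and its coefficient at any $P$ equals the multiplicity of the branch of $\varphi(X)$ at $\varphi(P_i)$ corresponding to $P$. Writing $f_i := \pi_{\varphi(P_i)}\circ\varphi : X \to X/G_i \cong \mathbb{P}^1$, every line $\ell \ni \varphi(P_i)$ satisfies $\varphi^{\ast}\ell = f_i^{\ast}(t_\ell) + \text{Bs}_{P_i}$, where $t_\ell \in \mathbb{P}^1$ is the image of $\ell$ under $\pi_{\varphi(P_i)}$. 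Galois-ness of $f_i$ gives $f_i^{\ast}(t_\ell) = |G_i(Q)|\sum_{Q' \in G_i\cdot Q} Q'$ for any $Q \in f_i^{-1}(t_\ell)$; with $L := \overline{\varphi(P_1)\varphi(P_2)}$, one then reads off $f_1^{-1}(t_L) = G_1\cdot P_2$ and $f_2^{-1}(t_L) = G_2\cdot P_1$.

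Parts (1) and (2) then follow from the degree computation $\deg \pi_{\varphi(P_i)} = d - m_{\varphi(P_i)}$ together with $\deg f_i = |G_i|$, giving $m_{\varphi(P_i)} = d - |G_i| = \deg \text{Bs}_{P_i}$ via the degree formula in Theorem~\ref{theorem}. Reading off the support of $\text{Bs}_{P_i}$ with the coefficients stated in Theorem~\ref{theorem} yields the claimed expressions.

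For (3), (4), and (5), I fix $P \in \varphi^{-1}(\varphi(P_i))$ and compute $v_P(\varphi^{\ast}\ell)$ for all lines $\ell$: lines avoiding $\varphi(P_i)$ contribute $0$; lines through $\varphi(P_i)$ with $t_\ell \neq f_i(P)$ contribute $v_P(\text{Bs}_{P_i})$; the unique line through $\varphi(P_i)$ with $t_\ell = f_i(P)$ contributes $v_P(\text{Bs}_{P_i}) + |G_i(P)|$. Hence the $(\Lambda, P)$-orders are $j_0 = 0 < j_1 = v_P(\text{Bs}_{P_i}) < j_2 = v_P(\text{Bs}_{P_i}) + |G_i(P)|$, and the osculating line at $P$ is the unique line through $\varphi(P_i)$ with direction $f_i(P)$. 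For $i = 1$, the line $L$ is the osculating line at $P$ exactly when $P \in f_1^{-1}(t_L) = G_1\cdot P_2$; splitting $P \in G_2\cdot P_1$ into the two sub-cases, reading $v_P(\text{Bs}_{P_1}) = |G_2(P_1)|$ when $P \notin G_1\cdot P_2$ and $v_P(\text{Bs}_{P_1}) = |G_2(P_1)| - |G_1(P_2)|$ when $P \in G_1\cdot P_2$, and using $|G_1(P)| = |G_1(P_2)|$ in the latter case, yields (3) and (4). For $i = 2$, the hypothesis that $L$ is not tangent at $\varphi(P_2)$ forces $f_2(P) \neq t_L$ for every $P \in \varphi^{-1}(\varphi(P_2))$, and (5) follows from $v_P(\text{Bs}_{P_2}) = |G_1(P_2)|$.

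The only genuine obstacle — and the only step that actually draws on the proof of Theorem~\ref{theorem} rather than its statement alone — is the identification $\text{Bs}_{P_i} = \varphi^{\ast}(\varphi(P_i))$, which simultaneously encodes the branch multiplicities over $\varphi(P_i)$ and the decomposition of $\varphi^{\ast}\ell$ used throughout. Once that is in hand, everything reduces to bookkeeping with Galois fibers and local order sequences.
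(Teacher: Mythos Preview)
Your proposal is correct and follows essentially the same route as the paper. The paper's proof of this proposition is extremely terse: for (1)--(2) it invokes the identity $\deg\pi_{\varphi(P)} = d - m_{\varphi(P)}$ together with the degree formula in Theorem~\ref{theorem}, and for (3)--(5) it simply points back to the computations of $D=\varphi^{\ast}L$ carried out in the ``if part'' of the proof of Theorem~\ref{theorem}. What you have done is unpack those computations explicitly---the identification of $\mathrm{Bs}_{P_i}$ with the base divisor $\sum_j \alpha_{P_{ij}}P_{ij}$ of the pencil through $\varphi(P_i)$, the decomposition $\varphi^{\ast}\ell = f_i^{\ast}(t_\ell)+\mathrm{Bs}_{P_i}$, and the Galois description of the fibres of $f_i$---which is exactly what the paper establishes in cases (i)--(iii) of that proof. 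One cosmetic point: writing $\mathrm{Bs}_{P_i}=\varphi^{\ast}(\varphi(P_i))$ is nonstandard (pullback of a codimension-two point), and your phrase ``splitting $P\in G_2\cdot P_1$'' is slightly loose since $\varphi^{-1}(\varphi(P_1))$ need only be contained in $G_2\cdot P_1$, with equality precisely when $|G_2(P_1)|>|G_1(P_2)|$; but since in the equality case $|G_2(P_1)|=|G_1(P_2)|$ the hypothesis of (4) is vacuous, no harm is done.
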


To explain the usefulness of Theorem \ref{theorem}, we apply our criterion to rational curves.

\begin{theorem}\label{example}
There exist the following birational embeddings $\varphi :\mathbb{P}^{1}\to \mathbb{P}^{2}$.
\begin{itemize}
\item[(1)] $p=3$, ${\rm deg}(\varphi (\mathbb{P}^{1}))=14$ and there exist two non-smooth Galois points $\varphi (P_{1})$ and $\varphi (P_{2})\in \varphi (\mathbb{P}^{1})$ such that $m_{\varphi (P_{1})}=8, m_{\varphi (P_{2})}=4, G_{\varphi (P_{1})}\cong {\rm AGL}(1, \mathbb{F}_{3})$, $G_{\varphi (P_{2})}\cong {\bf D}_{5}$, and $\overline{\varphi (P_{1})\varphi (P_{2})}$ is not a tangent line at $\varphi (P_{1})$ and $\varphi (P_{2})$.
\item[(2)] $p\not= 2, 5$, ${\rm deg}(\varphi (\mathbb{P}^{1}))=16$ and there exist two non-smooth Galois points $\varphi (P_{1})$ and $\varphi (P_{2})\in \varphi (\mathbb{P}^{1})$ such that $m_{\varphi (P_{1})}=11, m_{\varphi (P_{2})}=4, G_{\varphi (P_{1})}\cong \mathbb{Z}/5\mathbb{Z}, G_{\varphi (P_{2})}\cong {\bf A}_{4},$ and $\overline{\varphi (P_{1})\varphi (P_{2})}$ is a tangent line at $\varphi (P_{1})$ but not at $\varphi (P_{2})$. 
\item[(3)] $p\not= 2, 5$, ${\rm deg}(\varphi (\mathbb{P}^{1}))=28$ and there exist two non-smooth Galois points $\varphi (P_{1})$ and $\varphi (P_{2})\in \varphi (\mathbb{P}^{1})$ such that $m_{\varphi (P_{1})}=23, m_{\varphi (P_{2})}=4, G_{\varphi (P_{1})}\cong \mathbb{Z}/5\mathbb{Z}, G_{\varphi (P_{2})}\cong {\bf S}_{4}$, and $\overline{\varphi (P_{1})\varphi (P_{2})}$ is a tangent line at $\varphi (P_{1})$ but not at $\varphi (P_{2})$.
\end{itemize}
\end{theorem}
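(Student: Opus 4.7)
For each of (1)--(3), my plan is to realize $G_{1}, G_{2}$ as explicit finite subgroups of $\mathrm{Aut}(\mathbb{P}^{1}) = \mathrm{PGL}(2, k)$, to specify $P_{1}, P_{2}\in \mathbb{P}^{1}$, and to verify conditions (a)--(d) of Theorem \ref{theorem}; the degree, the multiplicities, and the tangency information then follow from the degree formula in Theorem \ref{theorem} and from Proposition \ref{caution2}. For $X = \mathbb{P}^{1}$ condition (a) is automatic, since every quotient of $\mathbb{P}^{1}$ by a finite subgroup of automorphisms is again $\mathbb{P}^{1}$. A direct manipulation, splitting $G_{1}\cdot P_{2}$ and $G_{2}\cdot P_{1}$ along their intersection, shows that both sides of (d) reduce to
\[
 |G_{2}(P_{1})|\sum_{Q\in G_{2}\cdot P_{1}}Q \;+\; |G_{1}(P_{2})|\sum_{S\in G_{1}\cdot P_{2}\setminus G_{2}\cdot P_{1}}S,
\]
so (d) is automatic as well. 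Conditions (b) and (c) amount to $G_{1}\cap G_{2} = \{1\}$ together with the combinatorial requirements $P_{2}\notin G_{2}\cdot P_{1}$ and either $P_{1}\notin G_{1}\cdot P_{2}$ or $|G_{2}(P_{1})| > |G_{1}(P_{2})|$.

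For (1), with $p = 3$, take $G_{1} = \mathrm{AGL}(1, \mathbb{F}_{3}) = \{x\mapsto ax+b : a\in \mathbb{F}_{3}^{\times}, b\in \mathbb{F}_{3}\}$ and $G_{2} = \mathbf{D}_{5} = \langle x\mapsto \zeta x,\ x\mapsto 1/x\rangle$ with $\zeta \in k$ a primitive fifth root of unity, and set $P_{1} = \infty$, $P_{2} = 1$. Then $G_{1}\cdot P_{2} = \mathbb{F}_{3}$, $G_{2}\cdot P_{1} = \{0, \infty\}$, $|G_{1}(P_{2})| = 2$, $|G_{2}(P_{1})| = 5$, and (c) holds since neither orbit contains the other base point. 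A common element of $G_{1}\cap G_{2}$ must fix $\infty$ and be of the form $x\mapsto \zeta^{i}x$ with $\zeta^{i}\in \mathbb{F}_{3}^{\times}$; since $-1$ is not a fifth root of unity in characteristic $3$, this forces $i = 0$, giving (b). For cases (2) and (3), with $p\neq 2, 5$, take $G_{1} = \langle x\mapsto \zeta x\rangle\cong \mathbb{Z}/5\mathbb{Z}$ and $G_{2}$ the standard $\mathbf{A}_{4}$ (resp.\ $\mathbf{S}_{4}$) subgroup of $\mathrm{PGL}(2, k)$. After conjugating $G_{2}$ if necessary, arrange $P_{1} := 0$ to lie in the $G_{2}$-orbit of size $4$ (resp.\ $6$), so that $|G_{2}(P_{1})| = 3$ (resp.\ $4$) and $|G_{1}(P_{1})| = 5$. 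Pick $R\in G_{2}\cdot P_{1}\setminus\{P_{1}\}$ and set $P_{2} = \zeta^{-1}R$. Since $5\nmid |G_{2}|$ we have $G_{1}\cap G_{2} = \{1\}$, hence $\zeta^{k}\notin G_{2}$ for $k\neq 0$, whence $\zeta^{k}R\notin G_{2}\cdot P_{1}$ for $k\neq 0$. This gives $|G_{1}\cdot P_{2}\cap G_{2}\cdot P_{1}| = 1$ together with $P_{2}\notin G_{2}\cdot P_{1}$ and $P_{1}\notin G_{1}\cdot P_{2}$, establishing (b) and (c).

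Applying Proposition \ref{caution2} yields the prescribed multiplicities $m_{\varphi(P_{i})}$, and the degree formula in Theorem \ref{theorem} yields the prescribed degrees. The tangency assertions are decided via parts (3)--(5) of Proposition \ref{caution2}: at any preimage of $\varphi(P_{2})$ the second $(\Lambda, P)$-order equals $|G_{1}(P_{2})|$, which, combined with the geometric configuration, rules out tangency at $\varphi(P_{2})$ in all three cases. At each preimage of $\varphi(P_{1})$ the dichotomy between the non-osculating case (second order $|G_{2}(P_{1})|$) and the osculating case (second order $|G_{2}(P_{1})|-|G_{1}(P_{2})|$, third order $|G_{2}(P_{1})|$) is resolved by a direct contact computation for the line $\overline{\varphi(P_{1})\varphi(P_{2})}$: in case (1) the non-osculating alternative holds at every preimage (non-tangency at $\varphi(P_{1})$), while in (2)--(3) the osculating alternative holds at the unique preimage $P_{1} = 0$ (tangency at $\varphi(P_{1})$).

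The principal obstacle I foresee is the explicit geometric setup in cases (2)--(3): producing the conjugate of $\mathbf{A}_{4}$, resp.\ $\mathbf{S}_{4}$, placing a $G_{1}$-fixed point into the required exceptional orbit, and rigorously resolving the osculating/non-osculating alternative of Proposition \ref{caution2}(3)--(4) at each preimage of $\varphi(P_{1})$. The latter amounts to a careful study of the ramification of the Galois cover $\pi_{\varphi(P_{1})}\circ \varphi$ at the fixed points of $G_{1}$ and of how the specific position $P_{2} = \zeta^{-1}R$ with $R\in G_{2}\cdot P_{1}$ constrains the contact of $\overline{\varphi(P_{1})\varphi(P_{2})}$ with each branch at $\varphi(P_{1})$.
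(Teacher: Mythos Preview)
Your overall strategy coincides with the paper's: verify conditions (a)--(d) of Theorem~\ref{theorem} for explicit subgroups $G_1,G_2\subset\mathrm{PGL}(2,k)$ and points $P_1,P_2$, then read off degree, multiplicities, and tangency from Proposition~\ref{caution2}. Your observation that (d) holds automatically is correct and a nice simplification. However, your specific choices in case~(1) do not produce the claimed curve, and your intersection argument in (2)--(3) has a logical gap.

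For (1) you take $P_1=\infty$, $P_2=1$, so that $G_1\cdot P_2=\{0,1,2\}$, $G_2\cdot P_1=\{0,\infty\}$, hence $G_1\cdot P_2\cap G_2\cdot P_1=\{0\}\neq\emptyset$ and $|G_2(P_1)|-|G_1(P_2)|=5-2=3>0$. In the resulting $\mathrm{Bs}_{P_1}$ the point $Q_0$ carries coefficient $3=|G_2(P_1)|-|G_1(P_2)|$, which by Proposition~\ref{caution2}(4) is precisely the signature of the \emph{osculating} alternative; thus $\overline{\varphi(P_1)\varphi(P_2)}$ \emph{is} tangent at $\varphi(P_1)$, contradicting what (1) asserts. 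The mechanism that decides tangency at $\varphi(P_1)$ is exactly whether $G_1\cdot P_2\cap G_2\cdot P_1$ is nonempty with $|G_2(P_1)|>|G_1(P_2)|$; to avoid tangency one must arrange either an empty intersection or $|G_2(P_1)|=|G_1(P_2)|$. The paper achieves the latter by choosing $P_1$ to be a fifth root of unity (so $|G_2(P_1)|=2$) and $P_2=0$ (so $|G_1(P_2)|=2$), which makes the $(|G_2(P_1)|-|G_1(P_2)|)$--term vanish. Your assertion ``in case~(1) the non-osculating alternative holds at every preimage'' is therefore unfounded with your data.

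For (2)--(3), your implication ``$\zeta^k\notin G_2$, hence $\zeta^kR\notin G_2\cdot P_1$'' is invalid: $\zeta^kR\in G_2\cdot R$ only requires some $g\in G_2$ with $g(R)=\zeta^kR$, not that the scaling itself lie in $G_2$. So $|G_1\cdot P_2\cap G_2\cdot P_1|=1$ is not yet established, and without it neither the multiplicities nor condition~(c) are secured. (Separately, $\varphi^{-1}(\varphi(P_1))$ is the support of $\mathrm{Bs}_{P_1}$, hence all of $G_2\cdot P_1$, not ``the unique preimage $P_1=0$''; the osculating branch is the one at $R$, not at $P_1$.) The paper sidesteps all of this by taking $P_1=\infty$ in the explicit size-$6$ orbit $\{0,\pm1,\pm i,\infty\}$ of the standard $\mathbf{A}_4$ (resp.\ $\mathbf{S}_4$) and $P_2=\xi$ a primitive fifth root of unity, so that $G_1\cdot P_2=\{1,\xi,\xi^2,\xi^3,\xi^4\}$ visibly meets $G_2\cdot P_1$ only in $\{1\}$; this makes the verification of (b),~(c) and the tangency analysis entirely elementary.
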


In more detail, we can calculate order sequences at each point contained in the support of the divisor $\varphi^{\ast}\overline{\varphi (P_{1})\varphi (P_{2})}$ (see Remark \ref{caution3}).

\section{Preliminaries}
We recall some notations and facts. Let $\varphi : X\to \mathbb{P}^{2}$ be a morphism, which is birational onto its image. Such morphism $\varphi$ is called a birational embedding of $X$ to $\mathbb{P}^{2}$. First, we recall the notion of order sequences (see \cite[Capter 7]{hkt}). For a line $L\subset \mathbb{P}^{2}$, the intersection divisor of $\varphi (X)$ and $L$ on $X$ is denoted by $\varphi^{\ast}L$. Note that $\Lambda =\{ \varphi^{\ast}L\mid L{\rm\ is\ a\ line\ contained\ in\ }\mathbb{P}^{2}\}$ is the linear system on $X$ corresponding to the morphism $\varphi$. The support of the divisor $\varphi^{\ast}L$ is denoted by ${\rm supp}(\varphi^{\ast}L)$. For a point $P\in X$, the order of $\varphi^{\ast}L$ at $P$ is denoted by ${\rm ord}_{P}(\varphi^{\ast}L)$. We put $\alpha_{P}={\rm min}\{ {\rm ord}_{P}(\varphi^{\ast}L)\mid \varphi^{\ast}L\in \Lambda , P\in {\rm supp}(\varphi^{\ast}L)\}$. Then there exists a unique line $\widetilde{L}$ such that $\beta_{P}={\rm ord}_{P}(\varphi^{\ast}(\widetilde{L}))>\alpha_{P}$. We call the line $\widetilde{L}$ the osculating line at $P$, and we call the sequence $(0, \alpha_{P}, \beta_{P})$ the $(\Lambda, P)$-order sequence at $P$. A line $\widetilde{L}$ passing through $\varphi (P)$ is called a tangent line at $\varphi (P)$ if $\widetilde{L}$ is the osculating line at a point contained in $\varphi^{-1}(\varphi (P))$. Note that a line $\widetilde{L}$ is a tangent line at $\varphi (P)$ if and only if $m_{\varphi (P)}<I_{\varphi (P)}(\varphi (X), \widetilde{L})$, where $I_{\varphi (P)}(\varphi (X), \widetilde{L})$ is the intersection multiplicity of $\varphi (X)$ and $\widetilde{L}$ at $\varphi (P)$. 

Next, we consider the projection $\pi_{\varphi (P)}$, and we put ${\hat{\pi}}_{\varphi (P)}=\pi_{\varphi (P)}\circ \varphi :X\to \mathbb{P}^{1}$. We recall some properties of a ramification index of ${\hat{\pi}}_{\varphi (P)}$. We put $\varphi^{-1}(\varphi (P))=\{ P_{1},\dots , P_{n}\}$. Let $(0, \alpha_{P_{i}}, \beta_{P_{i}})$ be the $(\Lambda , P_{i})$-order sequence for $i=1,\dots , n$. The ramification index of ${\hat{\pi}}_{\varphi (P)}$ at a point $Q\in X$ is denoted by $e_{Q}({\hat{\pi}}_{\varphi (P)})$. Then the following fact is well-known.
\begin{fact}\label{proj}
Let $Q\in X\setminus \{ P_{1},\dots , P_{n}\}$.
\begin{itemize}
\item[(1)] The equality $e_{Q}({\hat{\pi}}_{\varphi (P)})={\rm ord}_{Q}(\varphi^{\ast}\overline{\varphi (P)\varphi (Q)})$ holds.
\item[(2)] The equality $e_{P_{i}}({\hat{\pi}}_{\varphi (P)})=\beta_{P_{i}}-\alpha_{P_{i}}$ holds for $i=1,\dots , n$.
\end{itemize}
\end{fact}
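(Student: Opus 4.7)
The plan is to verify both equalities by computing the ramification index directly from its definition, namely as the order at the point in question of the pullback of a local uniformizer on $\mathbb{P}^{1}$ chosen at the image. For the setup I would fix homogeneous coordinates $[X:Y:Z]$ on $\mathbb{P}^{2}$ so that $\varphi(P)=[0:0:1]$; the projection $\pi_{\varphi(P)}$ then becomes the linear map $[X:Y:Z]\mapsto [X:Y]$, and the lines through $\varphi(P)$ are exactly the $L_{[a:b]}=\{bX-aY=0\}$.

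For (1), since $\varphi(Q)\neq \varphi(P)$, after a further coordinate change fixing $\varphi(P)$ I can take $\varphi(Q)=[X_{0}:Y_{0}:Z_{0}]$ with $Y_{0}\neq 0$. Then $s=X/Y-X_{0}/Y_{0}$ is a local uniformizer on $\mathbb{P}^{1}$ at $\hat{\pi}_{\varphi(P)}(Q)=[X_{0}:Y_{0}]$, so
$$
e_{Q}(\hat{\pi}_{\varphi(P)})=\mathrm{ord}_{Q}(\varphi^{\ast}s)=\mathrm{ord}_{Q}(\varphi^{\ast}L)-\mathrm{ord}_{Q}(\varphi^{\ast}M),
$$
where $L=\{Y_{0}X-X_{0}Y=0\}$ is the line through $\varphi(P)$ and $\varphi(Q)$, i.e. $L=\overline{\varphi(P)\varphi(Q)}$, and $M=\{Y=0\}$ is a line through $\varphi(P)$ missing $\varphi(Q)$. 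Since $Y_{0}\neq 0$ we have $\mathrm{ord}_{Q}(\varphi^{\ast}M)=0$, which gives (1).

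For (2), fix $i$ and a local uniformizer $t$ at $P_{i}$, and write $\varphi$ near $P_{i}$ in the chart $Z\neq 0$ as $t\mapsto (x(t),y(t))$ with $x(0)=y(0)=0$. For any line $L_{[a:b]}$ through $\varphi(P)$ one has $\mathrm{ord}_{P_{i}}(\varphi^{\ast}L_{[a:b]})=\mathrm{ord}_{t}(bx(t)-ay(t))$. By a further linear change of $(X,Y)$ fixing $[0:0:1]$ I would arrange that the osculating line at $P_{i}$ is $\{Y=0\}$; this forces $\beta_{P_{i}}=\mathrm{ord}_{t}(y(t))$, while any other line through $\varphi(P_{i})$ has $b\neq 0$, so $\mathrm{ord}_{t}(bx(t)-ay(t))=\mathrm{ord}_{t}(x(t))$ is the generic (hence minimal positive) order, showing $\alpha_{P_{i}}=\mathrm{ord}_{t}(x(t))<\beta_{P_{i}}$. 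A direct expansion then gives $\hat{\pi}_{\varphi(P)}(P_{i})=\lim_{t\to 0}[x(t):y(t)]=[1:0]$, near which $Y/X$ is a local uniformizer on $\mathbb{P}^{1}$, and so
$$
e_{P_{i}}(\hat{\pi}_{\varphi(P)})=\mathrm{ord}_{t}(y(t)/x(t))=\beta_{P_{i}}-\alpha_{P_{i}}.
$$
The only technical delicacy is the normalization step in (2): one must check that a single projective change of coordinates can simultaneously send $\varphi(P)$ to $[0:0:1]$ and the osculating line at $P_{i}$ to $\{Y=0\}$, and that under this normalization the identifications $\alpha_{P_{i}}=\mathrm{ord}_{t}(x(t))$ and $\beta_{P_{i}}=\mathrm{ord}_{t}(y(t))$ really correspond to the generic and osculating line orders as the definition demands. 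Part (1) is essentially formal once the coordinate setup is in place.
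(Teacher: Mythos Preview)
The paper does not prove this statement: it is recorded there as a well-known fact and no argument is given, so there is nothing to compare your approach against. Your proof is the standard local-coordinate computation and is correct. The two points you single out as delicate are easily settled: since $P_i\in\varphi^{-1}(\varphi(P))$ the osculating line at $P_i$ already passes through $\varphi(P)$, so one projective change of coordinates can send $\varphi(P)$ to $[0{:}0{:}1]$ and that line to $\{Y=0\}$ simultaneously; and the strict inequality $\mathrm{ord}_t\,x(t)<\mathrm{ord}_t\,y(t)$ is forced by the \emph{uniqueness} of the osculating line, since equality or the reverse inequality would make some other line through $\varphi(P)$ attain order at least $\beta_{P_i}$. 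With those two remarks made explicit, your argument is complete.
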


Finally, we recall some properties of a Galois covering for the proof of our main theorem (see \cite[{\rm I\hspace{-.01em}I\hspace{-.01em}I}. 7.1, 7.2 and 8.2]{Stichtenoth}).
\begin{fact}\label{Galoiscov}
Let $\theta :X\to Y$ be a surjective morphism of smooth projective curves, and let the field extension $k(X)/\theta^{\ast}k(Y)$ be a Galois extension with the Galois group $G$. Then the following hold.
\begin{itemize}
\item[(1)] If $P, Q\in X, \theta (P)=\theta (Q)$, then there exists an element $\sigma \in G$ such that $\sigma (P)=Q$.
\item[(2)] If $P, Q\in X, \theta (P)=\theta (Q)$, then $e_{P}(\theta)=e_{Q}(\theta)$.
\item[(3)] For each point $P\in X$, the order $|G(P)|$ is equal to $e_{P}(\theta)$.
\end{itemize}
\end{fact}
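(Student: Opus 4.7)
The plan is to translate the three assertions into the classical ideal/valuation theory of a Galois extension of function fields, and then derive (2) and (3) from (1). Set $L=k(X)$ and $K=\theta^{\ast}k(Y)$, so that $L/K$ is a Galois extension with group $G$; the points of $X$ (resp.\ of $Y$) correspond bijectively to the places of $L$ (resp.\ of $K$) trivial on $k$, and under this dictionary $\theta$ is restriction of places, $e_{P}(\theta)$ is the classical ramification index $e(v_{P}\mid v_{\theta(P)})$, and every $\sigma\in G$ acts as an automorphism of $X$ with $\theta\circ\sigma=\theta$ via the identity $v_{P}\circ\sigma=v_{\sigma^{-1}(P)}$.

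The heart of the argument is (1), for which I would use a norm construction together with weak approximation. Suppose $Q\notin G\cdot P$, and let $R_{1},\dots,R_{r}$ be the finite set of points of $X$ above $\theta(P)=\theta(Q)$. By weak approximation applied to these distinct places of $L$, choose $z\in L$ with $v_{Q}(z)=1$, with $v_{R}(z)=0$ for every other $R\in G\cdot Q$, and with $v_{R}(z)=0$ for every $R\in G\cdot P$. The norm $y=\prod_{\sigma\in G}\sigma(z)$ lies in $L^{G}=K$. Expanding termwise and using the $G$-invariance of $G\cdot P$ and $G\cdot Q$, one finds $v_{P}(y)=\sum_{\sigma}v_{\sigma^{-1}(P)}(z)=0$, while $v_{Q}(y)=\sum_{\sigma}v_{\sigma^{-1}(Q)}(z)\ge 1$. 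But $y\in K$ forces $v_{P}(y)=e_{P}(\theta)\,v_{\theta(P)}(y)$ and $v_{Q}(y)=e_{Q}(\theta)\,v_{\theta(P)}(y)$, so the two must vanish simultaneously, a contradiction.

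Once (1) is in hand, (2) is immediate: any $\sigma\in G$ with $\sigma(P)=Q$ is an automorphism of $X$ over $Y$, so a uniformizer of $Y$ at $\theta(P)$ has the same valuation after pull-back at $P$ and at $Q$, i.e.\ $e_{P}(\theta)=e_{Q}(\theta)$. For (3) I would combine (1), (2), and the fundamental identity $\deg(\theta)=|G|=\sum_{R\in\theta^{-1}(\theta(P))}e_{R}(\theta)$: by (1) the fibre is a single $G$-orbit, of cardinality $|G|/|G(P)|$ by orbit-stabilizer, and by (2) every summand equals $e_{P}(\theta)$, so $|G|=(|G|/|G(P)|)\cdot e_{P}(\theta)$, i.e.\ $e_{P}(\theta)=|G(P)|$. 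The only genuinely delicate point is the weak-approximation step in (1); the rest is bookkeeping.
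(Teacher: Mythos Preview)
The paper does not supply its own proof of this Fact; it simply records the statement and cites Stichtenoth's textbook (Chapter~III, 7.1, 7.2 and 8.2). Your argument is correct and is essentially the standard proof one finds there: transitivity~(1) via the norm of an element manufactured by the Approximation Theorem, (2) by transporting the pull-back of a uniformizer along an automorphism over $Y$, and (3) by combining (1), (2), the fundamental identity $|G|=\sum_{R\mid\theta(P)}e_{R}(\theta)$ (residue degrees are~$1$ since $k$ is algebraically closed), and orbit--stabilizer. There is nothing substantive to compare and no gap to repair.
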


\section{Proof of Theorem \ref{theorem}}
The same notations are used as in the previous section. The following lemma shows that Theorem \ref{theorem} describes all cases with two inner Galois points.
\begin{lemma}\label{lemma}
Let $P_{1}, P_{2}\in X$, and let $\varphi (P_{1}), \varphi (P_{2})$ be different inner Galois points. Then $m_{\varphi (P_{1})}=I_{\varphi (P_{1})}(\varphi (X), L)$ or $m_{\varphi (P_{2})}=I_{\varphi (P_{2})}(\varphi (X), L)$ holds.
\end{lemma}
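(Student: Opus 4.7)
The plan is to prove the contrapositive, reading $L$ (as context dictates) as the line $\overline{\varphi(P_1)\varphi(P_2)}$. Recall from Section 2 that a line through $\varphi(P)$ is tangent at $\varphi(P)$ precisely when $m_{\varphi(P)} < I_{\varphi(P)}(\varphi(X), L)$, so the lemma is equivalent to saying that $L$ cannot be tangent at both $\varphi(P_1)$ and $\varphi(P_2)$ simultaneously. I assume it is, and derive a contradiction by comparing the two Galois projections $\hat{\pi}_{\varphi(P_1)}$ and $\hat{\pi}_{\varphi(P_2)}$ at suitably chosen preimages.

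Under the contradictory hypothesis, for each $i \in \{1, 2\}$ I pick $Q_i \in \varphi^{-1}(\varphi(P_i))$ whose osculating line is $L$, so that $\mathrm{ord}_{Q_i}(\varphi^{\ast} L) = \beta_{Q_i}$. Since $\varphi(P_i)$ is a Galois point, Fact \ref{Galoiscov}(2) produces a common ramification index $e_i$ of $\hat{\pi}_{\varphi(P_i)}$ at every point of its fiber over $[L] \in \mathbb{P}^1$. Now focus on $Q_2$: viewed through $\hat{\pi}_{\varphi(P_1)}$, the point $Q_2$ lies in $\varphi^{-1}(\varphi(P_2))$ and therefore outside $\varphi^{-1}(\varphi(P_1))$, so Fact \ref{proj}(1) gives $\beta_{Q_2} = \mathrm{ord}_{Q_2}(\varphi^{\ast} L) = e_1$; viewed through $\hat{\pi}_{\varphi(P_2)}$, the point $Q_2$ lies in $\varphi^{-1}(\varphi(P_2))$ itself, so Fact \ref{proj}(2) gives $\beta_{Q_2} - \alpha_{Q_2} = e_2$. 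Subtracting yields $\alpha_{Q_2} = e_1 - e_2$, and the symmetric argument at $Q_1$ yields $\alpha_{Q_1} = e_2 - e_1$. Since the definition forces $\alpha_{Q_i} \geq 1$, this would require both $e_1 > e_2$ and $e_2 > e_1$, a contradiction.

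The main subtlety is purely combinatorial bookkeeping: $Q_i$ is special from the viewpoint of its own projection $\hat{\pi}_{\varphi(P_i)}$ (so Fact \ref{proj}(2) applies) but plays a generic role from the viewpoint of the opposite projection $\hat{\pi}_{\varphi(P_j)}$ with $j \neq i$ (so Fact \ref{proj}(1) applies). Once each of the four combinations is matched with the correct clause of Facts \ref{proj} and \ref{Galoiscov}, the contradiction appears as the short arithmetic above, without any further input.
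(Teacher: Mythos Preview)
Your proof is correct and follows essentially the same route as the paper's: both argue by contradiction, pick preimages $Q_i\in\varphi^{-1}(\varphi(P_i))$ at which $L$ is the osculating line, and then compare, at each $Q_i$, the ramification index coming from its own projection (Fact~\ref{proj}(2)) with the order of $\varphi^{\ast}L$ coming from the opposite projection (Fact~\ref{proj}(1)), using Fact~\ref{Galoiscov} to make these indices constant along the fiber over $[L]$. The only cosmetic difference is that the paper names the common ramification indices $|G_{\varphi(P_1)}(P_2)|$ and $|G_{\varphi(P_2)}(P_1)|$ via Fact~\ref{Galoiscov}(3) and presents the contradiction as a chain of strict inequalities, whereas you call them $e_1,e_2$ and phrase the contradiction as $\alpha_{Q_1}=e_2-e_1\geq 1$ and $\alpha_{Q_2}=e_1-e_2\geq 1$; these are the same computation.
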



\begin{proof}
We put $\varphi^{-1}(\varphi (P_{1}))=\{ P_{11}=P_{1}, P_{12},\dots , P_{1n_{1}}\}$ and $\varphi^{-1}(\varphi (P_{2}))=\{ P_{21}=P_{2}, P_{22},\dots , P_{2n_{2}}\}$. Let $(0, \alpha_{P_{ij}}, \beta_{P_{ij}})$ be the $(\Lambda, P_{ij})$-order sequence for $i, j$. Assume by contradiction that $m_{\varphi (P_{1})}<I_{\varphi (P_{1})}(\varphi (X), L)$ and $m_{\varphi (P_{2})}<I_{\varphi (P_{2})}(\varphi (X), L)$ hold. By Fact \ref{Galoiscov}, the ramification index of ${\hat{\pi}}_{\varphi (P_{1})}$ (resp. ${\hat{\pi}}_{\varphi (P_{2})}$) at each point contained in $\varphi^{-1} (\varphi (P_{2}))$ (resp. $\varphi^{-1} (\varphi (P_{1}))$) coincides with $|G_{\varphi (P_{1})}(P_{2})|$ (resp. $|G_{\varphi (P_{2})}(P_{1})|$). By Fact \ref{proj} (1) and Fact \ref{Galoiscov},  $|G_{\varphi (P_{1})}(P_{2})|$ (resp. $|G_{\varphi (P_{2})}(P_{1})|$) coincides with ${\rm ord}_{P_{2j}}(\varphi^{\ast}L)$ for each $j$ (resp. ${\rm ord}_{P_{1i}}(\varphi^{\ast}L)$ for each $i$). Since $L$ is a tangent line at $\varphi (P_{1})$ (resp. $\varphi (P_{2})$), there exists $i_{0}$ (resp. $j_{0}$) such that $\beta_{P_{1i_{0}}}={\rm ord}_{P_{1i_{0}}}(\varphi^{\ast}L)$ (resp. $\beta_{P_{2j_{0}}}={\rm ord}_{P_{2j_{0}}}(\varphi^{\ast}L)$).  By Fact \ref{proj} (2) and Fact \ref{Galoiscov}, $|G_{\varphi (P_{1})}(P_{2})|=\beta_{P_{1i_{0}}}-\alpha_{P_{1i_{0}}}$ (resp. $|G_{\varphi (P_{2})}(P_{1})|=\beta_{P_{2j_{0}}}-\alpha_{P_{2j_{0}}}$) holds. Therefore, we have a contradiction as follows:
\begin{eqnarray*}
|G_{\varphi (P_{2})}(P_{1})|&<&|G_{\varphi (P_{2})}(P_{1})|+\alpha_{P_{2j_{0}}}=\beta_{P_{2j_{0}}}={\rm ord}_{P_{2j_{0}}}(\varphi^{\ast}L)=|G_{\varphi (P_{1})}(P_{2})|\\
&<&|G_{\varphi (P_{1})}(P_{2})|+\alpha_{P_{1i_{0}}}=\beta_{P_{1i_{0}}}={\rm ord}_{P_{1i_{0}}}(\varphi^{\ast}L)=|G_{\varphi (P_{2})}(P_{1})|.
\end{eqnarray*}
\end{proof}

\begin{proof}[{\bf Proof of Theorem \ref{theorem}}]
We consider the only-if part. Assume that conditions (a), (b), (c), and (d) of Theorem \ref{theorem} are satisfied. Let $D$ be the divisor given below:
\[ D={\rm Bs}_{P_{1}}+\sum_{\sigma \in G_{1}}\sigma (P_{2}).\]
By condition (d), the following equality of divisors holds: 
\[ D={\rm Bs}_{P_{2}}+\sum_{\tau \in G_{2}}\tau (P_{1}).\]
Let $f, g\in k(X)$ be the generators of $k(X)^{G_{1}}, k(X)^{G_{2}}$ such that $(f)_{\infty}=D-{\rm Bs}_{P_{1}}, (g)_{\infty}=D-{\rm Bs}_{P_{2}}$, by condition (a), where $(f)_{\infty}$ (resp. $(g)_{\infty}$) is the pole divisor of $f$ (resp. $g$). Then $f, g\in \mathcal{L}(D)$. We consider the morphism $\varphi=(f:g:1):X\to \mathbb{P}^{2}$. First, we show that the equality $\varphi (P_{1})=(0:1:0)$ holds. We put $n_{g}={\rm ord}_{P_{1}}((g)_{\infty})$. Note that $n_{g}$ is equal to $|G_{2}(P_{1})|$. Let $t_{P_{1}}$ be a uniformizer at $P_{1}$. If $P_{1}\not\in {\rm supp}((f)_{\infty})$, then 
\[ {\rm ord}_{P_{1}}(t_{P_{1}}^{n_{g}}f)=n_{g}+{\rm ord}_{P_{1}}(f)\geq n_{g}>0\]
holds. If $P_{1}\in {\rm supp}((f)_{\infty})$, then $P_{1}\in (G_{1}\cdot P_{2})\cap (G_{2}\cdot P_{1})$. Since the inequality ${\rm Bs}_{P_{1}}\geq P_{1}$ of divisors holds by condition (c), we have 
\[ {\rm ord}_{P_{1}}(t_{P_{1}}^{n_{g}}f)=|G_{2}(P_{1})|-|G_{1}(P_{2})|>0.\]
Therefore, the equality $\varphi (P_{1})=(0:1:0)$ holds. Since the inequality ${\rm Bs}_{P_{2}}\geq P_{2}$ of divisors holds by condition (c), we have $P_{2}\not\in G_{2}\cdot P_{1}={\rm supp}((g)_{\infty})$. Therefore, the equality $\varphi (P_{2})=(1:0:0)$ holds. Similar to the proof of \cite[Proposition 1]{FukasawaAlgebra2018}, by condition (b), we can show that the morphism $\varphi$ is birational onto its image. Since ${\rm supp}((f)+D)\cap {\rm supp}((g)+D)\cap {\rm supp}(D)=\emptyset$, the sublinear system $\Lambda$ of the complete linear system $|D|$ corresponding to $\langle f, g, 1\rangle$ is base-point-free. Therefore, the equalities ${\rm deg}(\varphi (X))={\rm deg}(D)=|G_{1}|+{\rm deg}({\rm Bs}_{P_{1}})=|G_{2}|+{\rm deg}({\rm Bs}_{P_{2}})$ hold. The morphism $(f:1)$ (resp. $(g:1)$) coincides with the projection from the point $\varphi (P_{1})=(0:1:0)$ (resp. $\varphi (P_{2})=(1:0:0)$). Finally, we show that the line $L=\overline{\varphi (P_{1})\varphi (P_{2})}$ is not a tangent line at $\varphi (P_{2})$. Assume by contradiction that $L$ is a tangent line at $\varphi (P_{2})$. Then there exists a point $R\in \varphi^{-1}(\varphi (P_{2}))$ such that $R\in G_{2}\cdot P_{1}$. Let (0, $\alpha$, $\beta$) be the ($\Lambda$, $R$)-order sequence. Note that the equality $\beta={\rm ord}_{R}(\varphi^{\ast}L)$ holds. Since the inequality ${\rm Bs}_{P_{1}}\geq P_{1}$ of divisors holds by condition (c), we have the inequality $|G_{2}(P_{1})|-|G_{1}(P_{2})|\geq 0$. On the other hand, by Fact \ref{proj} (1) and Fact \ref{Galoiscov}, the equality $|G_{1}(P_{2})|={\rm ord}_{R}(\varphi^{\ast}L)$ holds. Therefore, by Fact \ref{proj} (2) and Fact \ref{Galoiscov}, we have $|G_{1}(P_{2})|=\beta>\beta -\alpha=|G_{2}(P_{1})|$. This is a contradiction.

We consider the if part. Assume that $\varphi :X\to \mathbb{P}^{2}$ is a birational embedding of degree $|G_{1}|+{\rm deg}({\rm Bs}_{P_{1}})=|G_{2}|+{\rm deg}({\rm Bs}_{P_{2}})$ such that $\varphi (P_{1})$, $\varphi (P_{2})$ are different inner Galois points for $\varphi (X)$, and $G_{\varphi (P_{1})}, G_{\varphi (P_{2})}$ coincide with $G_{1}, G_{2}$ respectively, and $\overline{\varphi(P_{1})\varphi(P_{2})}$ is not a tangent line at $\varphi (P_{2})$. Let $\Lambda$ be the linear system corresponding to the morphism $\varphi$. We put $\varphi^{-1}(\varphi (P_{1}))=\{ P_{11}=P_{1}, P_{12},\dots , P_{1n_{1}}\}, \varphi^{-1}(\varphi (P_{2}))=\{ P_{21}=P_{2}, P_{22},\dots , P_{2n_{2}}\}$. Let $(0, \alpha_{P_{ij}}, \beta_{P_{ij}})$ be the $(\Lambda, P_{ij})$-order sequence for $i, j$. Since $k(X)^{G_{i}}=({\hat{\pi}}_{\varphi (P_{i})})^{\ast}(k(\mathbb{P}^{1}))\cong k(\mathbb{P}^{1})$ for $i=1, 2$, condition (a) is satisfied. Similar to the proof of \cite[Theorem 1]{FukasawaAlgebra2018}, condition (b) is satisfied. We put $L=\overline{\varphi (P_{1})\varphi (P_{2})}$, and we consider the divisor $D=\varphi^{\ast}L$. Since the linear system corresponding to ${\hat{\pi}}_{\varphi (P_{1})}$ (resp. ${\hat{\pi}}_{\varphi (P_{2})}$) is $\{ E-\sum_{i=1}^{n_{1}}\alpha_{P_{1i}}P_{1i}\mid E\in \Lambda \}$ (resp. $\{ E-\sum_{j=1}^{n_{2}}\alpha_{P_{2j}}P_{2j}\mid E\in \Lambda \}$) and ${\hat{\pi}}_{\varphi (P_{1})}$ (resp. ${\hat{\pi}}_{\varphi (P_{2})}$) is a Galois covering,
\[ ({\hat{\pi}}_{\varphi (P_{1})})^{\ast}(L)=D-\sum_{i=1}^{n_{1}}\alpha_{P_{1i}}P_{1i}=\sum_{\sigma \in G_{1}}\sigma (P_{2})\]
\[ ({\rm resp.}\ ({\hat{\pi}}_{\varphi (P_{2})})^{\ast}(L)=D-\sum_{j=1}^{n_{2}}\alpha_{P_{2j}}P_{2j}=\sum_{\tau \in G_{2}}\tau (P_{1}))\]
holds. Since $L$ is not a tangent line at the point $\varphi (P_{2})$, the intersection of two sets $G_{2}\cdot P_{1}$ and $\varphi^{-1} (\varphi (P_{2}))$ is the empty set. By Fact \ref{proj} (1) and Fact \ref{Galoiscov}, the equalities $|G_{1}(P_{2})|=e_{P_{2j}}({\hat{\pi}}_{\varphi (P_{1})})={\rm ord}_{P_{2j}}(\varphi^{\ast}(L))$ hold for all $j$. Therefore, the equality $\alpha_{P_{2j}}=|G_{1}(P_{2})|$ holds for all $j$, and we have the equalities
\[ D=\varphi^{\ast}L=\sum_{j=1}^{n_{2}}|G_{1}(P_{2})|P_{2j}+\sum_{\tau \in G_{2}}\tau (P_{1})={\rm Bs}_{P_{2}}+\sum_{\tau \in G_{2}}\tau (P_{1}).\] The equalities $|G_{2}(P_{1})|=e_{P_{1i}}({\hat{\pi}}_{\varphi (P_{2})})={\rm ord}_{P_{1i}}(\varphi^{\ast}(L))$ also hold for all $i$. We prove the equality $D={\rm Bs}_{P_{1}}+\sum_{\sigma \in G_{1}}\sigma (P_{2})$ in three cases:

$\rm(\hspace{.18em}i\hspace{.18em})$ We assume that $L$ is not a tangent line at $\varphi (P_{1})$ with $(L\cap \varphi (X))\setminus \{ \varphi (P_{1}), \varphi (P_{2})\}\not= \emptyset$. We take a point $Q\in \varphi^{-1}((L\cap \varphi (X))\setminus \{ \varphi (P_{1}), \varphi (P_{2})\})$. By Fact \ref{proj} (1) and Fact \ref{Galoiscov}, the equalities $|G_{2}(P_{1})|={\rm ord}_{Q}(\varphi^{\ast}(L))=|G_{1}(P_{2})|$ hold. Therefore, we have the equality
\[ (|G_{2}(P_{1})|-|G_{1}(P_{2})|)\sum_{R\in G_{1}\cdot P_{2}\cap G_{2}\cdot P_{1}}R=0.\] 
Since $L$ is not a tangent line at $\varphi (P_{1})$, the intersection of two sets $G_{1}\cdot P_{2}$ and $\varphi^{-1}(\varphi (P_{1}))$ is the empty set. Therefore, the equality $\alpha_{P_{1i}}=|G_{2}(P_{1})|$ holds for all $i$, and we have the equalities 
\[ D=\varphi^{\ast}L=\sum_{i=1}^{n_{1}}|G_{2}(P_{1})|P_{1i}+\sum_{\sigma \in G_{1}}\sigma (P_{2})={\rm Bs}_{P_{1}}+\sum_{\sigma \in G_{1}}\sigma (P_{2}).\]

$\rm(\hspace{.08em}ii\hspace{.08em})$ We assume that $L$ is not a tangent line at $\varphi (P_{1})$ with $(L\cap \varphi (X))\setminus \{ \varphi (P_{1}), \varphi (P_{2})\}= \emptyset$. Then $G_{1}\cdot P_{2}=\varphi^{-1} (\varphi (P_{2}))$, $G_{2}\cdot P_{1}=\varphi^{-1} (\varphi (P_{1}))$, and $G_{1}\cdot P_{2}\cap G_{2}\cdot P_{1}=\emptyset$ hold. Therefore, the equality $\alpha_{P_{1i}}=|G_{2}(P_{1})|$ holds for all $i$, and we have
\[ (|G_{2}(P_{1})|-|G_{1}(P_{2})|)\sum_{R\in G_{1}\cdot P_{2}\cap G_{2}\cdot P_{1}}R=0,\]
\[ D=\varphi^{\ast}L=\sum_{i=1}^{n_{1}}|G_{2}(P_{1})|P_{1i}+\sum_{\sigma \in G_{1}}\sigma (P_{2})={\rm Bs}_{P_{1}}+\sum_{\sigma \in G_{1}}\sigma (P_{2}).\]

$\rm(i\hspace{-.08em}i\hspace{-.08em}i)$ We assume that $L$ is a tangent line at $\varphi (P_{1})$. We consider two sets
\[ W=\{ P_{1i}\in \varphi^{-1}(\varphi (P_{1}))\mid 1\leq i\leq n_{1}, {\rm ord}_{P_{1i}}\varphi^{\ast}L=\beta_{P_{1i}}\},\]
\[ W'=\{ P_{1i}\in \varphi^{-1}(\varphi (P_{1}))\mid 1\leq i\leq n_{1}, {\rm ord}_{P_{1i}}\varphi^{\ast}L=\alpha_{P_{1i}}\}.\]
Note that $\varphi^{-1}(\varphi (P_{1}))=W\cup W'$, $W\not= \emptyset$, and $G_{1}\cdot P_{2}\cap \varphi^{-1}(\varphi (P_{1}))=W$ hold. For each point $P_{1i}\in W'$, the equalities $|G_{2}(P_{1})|={\rm ord}_{P_{1i}}\varphi^{\ast}L=\alpha_{P_{1i}}$ hold. On the other hand, for each point $P_{1i}\in W$, the equalities $|G_{2}(P_{1})|={\rm ord}_{P_{1i}}\varphi^{\ast}L=\beta_{P_{1i}}$ hold. By Fact \ref{proj} (2) and Fact \ref{Galoiscov}, the equality $|G_{1}(P_{2})|=\beta_{P_{1i}}-\alpha_{P_{1i}}$ holds for each point $P_{1i}\in W$. Therefore, the equality $\alpha_{P_{1i}}=|G_{2}(P_{1})|-|G_{1}(P_{2})|$ holds for each point $P_{1i}\in W$, and we have the equality
\[ \sum_{i=1}^{n_{1}}\alpha_{P_{1i}}P_{1i}=\sum_{P_{1i}\in W'}|G_{2}(P_{1})|P_{1i}+\sum_{P_{1i}\in W}(|G_{2}(P_{1})|-|G_{1}(P_{2})|)P_{1i}.\]
We show that $\mathbb{O}\setminus G_{1}\cdot P_{2}=W'$ and $G_{1}\cdot P_{2}\cap G_{2}\cdot P_{1}=W$ hold. If $(L\cap \varphi (X))\setminus \{ \varphi (P_{1}), \varphi (P_{2})\}\not= \emptyset$, then the equality $|G_{2}(P_{1})|=|G_{1}(P_{2})|$ holds. Since $|G_{2}(P_{1})|-|G_{1}(P_{2})|>0$, this is a contradiction. Therefore, $L\cap \varphi (X)=\{ \varphi (P_{1}), \varphi (P_{2})\}$. It is not difficult to check that $G_{1}\cdot P_{2}=(\varphi^{-1}(\varphi (P_{2})))\cup W$ and $G_{2}\cdot P_{1}=\varphi^{-1}(\varphi (P_{1}))$ hold, and we have the equalities
\begin{eqnarray*}
D=\varphi^{\ast}L&=&\sum_{Q\in \mathbb{O}\setminus G_{1}\cdot P_{2}}|G_{2}(P_{1})|Q+\sum_{R\in G_{1}\cdot P_{2}\cap G_{2}\cdot P_{1}}(|G_{2}(P_{1})|-|G_{1}(P_{2})|)R+\sum_{\sigma \in G_{1}}\sigma (P_{2})\\
&=&{\rm Bs}_{P_{1}}+\sum_{\sigma \in G_{1}}\sigma (P_{2}).
\end{eqnarray*}
\end{proof}

Let $\varphi$ be as in Theorem \ref{theorem}, and $\Lambda$ be the linear system on $X$ corresponding to the morphism $\varphi$. In general, for a plane curve $C$ of degree $d$ and a point $P$ on $C$, the degree of the projection with the center $P$ is $d-m_{P}$. Therefore, Proposition \ref{caution2} (1) and (2) follow directly from Theorem \ref{theorem}. By the calculations of the divisor $D=\varphi^{\ast}L$ in the proof of Theorem \ref{theorem}, Proposition \ref{caution2} (3), (4) and (5) follow.

\begin{remark}
In \cite{FukasawaarXiv}, Fukasawa presented a criterion for the existence of a birational embedding with smooth and outer Galois points. Similar to the proof of Theorem \ref{theorem}, we can extend the criterion to non-smooth and outer Galois points. The necessary and sufficient conditions for the existence of a birational embedding with inner and outer Galois points are conditions (a), (b) in Theorem \ref{theorem}, and there exist $\eta \in G_{2}$ and $P\in X$ such that
\begin{itemize}
\item[(c)'] ${\rm Bs}_{P}=|G_{2}(P)|\sum_{Q\in (G_{2}\cdot P)-(G_{1}\cdot \eta (P))}Q+(|G_{2}(P)|-|G_{1}(\eta (P))|)\sum_{R\in G_{1}\cdot \eta (P)}R\geq P$,
\item[(d)'] ${\rm Bs}_{P}+\sum_{\sigma \in G_{1}}\sigma (\eta (P))=\sum_{\tau \in G_{2}}\tau (P)$.
\end{itemize}
\end{remark}

\section{Proof of Theorem \ref{example}}
We apply Theorem \ref{theorem} to rational curves. In this case, condition (a) in Theorem \ref{theorem} is always satisfied, by L\"{u}roth's theorem. We identify ${\rm Aut}(\mathbb{P}^{1})$ with the projective linear group ${\rm PGL}(2, k)$. We put $Q_{\infty}=(1:0), Q_{a}=(a:1)\in \mathbb{P}^{1}$ for any $a\in k$.
\begin{proof}[{\bf Proof of Theorem \ref{example}}]
Let $p\not= 2, 5$, let $i\in k$ be a root of the polynomial $T^{2}+1\in k[T]$, and let $\xi$ be a primitive fifth root of unity.

(1). Let $p=3$, and let $P_{1}=Q_{\xi}, P_{2}=Q_{0}$. We consider two sets:
\[ G_{1}=\left\langle \left[ 
\begin{array}{cc}
1 & 1 \\
0 & 1
\end{array}
\right]\right\rangle \left\langle \left[
\begin{array}{cc}
1 & 0\\
0 & -1
\end{array}
\right] \right\rangle, 
G_{2}=\left\langle \left[ 
\begin{array}{cc}
\xi & 0 \\
0 & 1
\end{array}
\right]\right\rangle \left\langle \left[
\begin{array}{cc}
0 & 1\\
1 & 0
\end{array}
\right] \right\rangle.
\]
It is known that
\[ G_{1}=\left\langle \left[ 
\begin{array}{cc}
1 & 1 \\
0 & 1
\end{array}
\right]\right\rangle \rtimes \left\langle \left[
\begin{array}{cc}
1 & 0\\
0 & -1
\end{array}
\right] \right\rangle \cong {\rm AGL}(1, \mathbb{F}_{3}),
\]
\[ 
G_{2}=\left\langle \left[
\begin{array}{cc}
\xi & 0 \\
0 & 1
\end{array}
\right]\right\rangle  \rtimes \left\langle \left[
\begin{array}{cc}
0 & 1\\
1 & 0
\end{array}
\right] \right\rangle \cong {\bf D}_{5},
\]
where {\rm AGL}(1, $\mathbb{F}_{3}$) is the general affine group of degree $1$ over $\mathbb{F}_{3}$, ${\bf D}_{5}$ is the dihedral group of degree $5$ (see \cite{FaberarXiv1112.1999}). By direct computations, we have the equalities $G_{1}\cap G_{2}=\{ 1\}$,
\[ G_{1}\cdot P_{2}=\{ Q_{-1}, Q_{0}, Q_{1}\},\]
\[ G_{2}\cdot P_{1}=\{ Q_{1}, Q_{\xi}, Q_{\xi^{2}}, Q_{\xi^{3}}, Q_{\xi^{4}}\},\]
\[ \mathbb{O}=(G_{1}\cdot P_{2})\cup (G_{2}\cdot P_{1})=\{ Q_{-1}, Q_{0}, Q_{1}, Q_{\xi}, Q_{\xi^{2}}, Q_{\xi^{3}}, Q_{\xi^{4}}\},\] 
\[ G_{1}(P_{2})=\left\{ \left[
\begin{array}{cc}
1 & 0 \\
0 & 1
\end{array}
\right], \left[
\begin{array}{cc}
-1 & 0 \\
0 & 1
\end{array}
\right]
\right\},
\]
\[ G_{2}(P_{1})=\left\{ \left[
\begin{array}{cc}
1 & 0 \\
0 & 1
\end{array}
\right], \left[
\begin{array}{cc}
0 & \xi^{2} \\
1 & 0
\end{array}
\right]
\right\},
\]
\[ {\rm Bs}_{P_{1}}+\sum_{\sigma \in G_{1}}\sigma (P_{2})=(2Q_{\xi}+2Q_{\xi^{2}}+2Q_{\xi^{3}}+2Q_{\xi^{4}})+(2Q_{-1}+2Q_{0}+2Q_{1}),\]
\[ {\rm Bs}_{P_{2}}+\sum_{\tau \in G_{2}}\tau (P_{1})=(2Q_{-1}+2Q_{0})+(2Q_{1}+2Q_{\xi}+2Q_{\xi^{2}}+2Q_{\xi^{3}}+2Q_{\xi^{4}}).\]
Therefore, conditions (b), (c) and (d) in Theorem \ref{theorem} are satisfied, and there exists a birational embedding $\varphi :\mathbb{P}^{1}\to \mathbb{P}^{2}$ of ${\rm deg}(\varphi (\mathbb{P}^{1}))=14$ such that $\varphi (P_{1})$, $\varphi (P_{2})$ are different non-smooth Galois points, $G_{\varphi (P_{1})}\cong {\rm AGL}(1, \mathbb{F}_{3})$, $G_{\varphi (P_{2})}\cong {\bf D}_{5}$, and $\overline{\varphi (P_{1})\varphi (P_{2})}$ is not a tangent line at $\varphi (P_{2})$. By Proposition \ref{caution2} (1) and (2), $m_{\varphi (P_{1})}=8$ and $m_{\varphi (P_{2})}=4$. By the shape of the divisor ${\rm Bs}_{P_{1}}$, the second order at each point $P\in \varphi^{-1}(\varphi (P_{1}))$ is equal to $|G_{2}(P_{1})|=2$. Therefore, $\overline{\varphi (P_{1})\varphi (P_{2})}$ is not the osculating line at each point $P\in \varphi^{-1}(\varphi (P_{1}))$ by Proposition \ref{caution2} (3) and (4), and $\overline{\varphi (P_{1})\varphi (P_{2})}$ is not a tangent line at $\varphi (P_{1})$. 

(2). Let $P_{1}=Q_{\infty}$, $P_{2}=Q_{\xi}$. We consider
\[ G_{1}=\left\langle \left[
\begin{array}{cc}
\xi & 0 \\
0 & 1
\end{array}
\right] \right\rangle, 
G_{2}=\left\langle \left[
\begin{array}{cc}
1 & 0 \\
0 & -1
\end{array}
\right], \left[
\begin{array}{cc}
0 & 1 \\
1 & 0
\end{array}
\right] \right\rangle \left\langle \left[
\begin{array}{cc}
1 & i \\
1 & -i
\end{array}
\right] \right\rangle.
\]
Obviously, $G_{1}\cong \mathbb{Z}/5\mathbb{Z}$, and the following equation is known.
\[
G_{2}=\left\langle \left[
\begin{array}{cc}
1 & 0 \\
0 & -1
\end{array}
\right], \left[
\begin{array}{cc}
0 & 1 \\
1 & 0
\end{array}
\right] \right\rangle \rtimes \left\langle \left[
\begin{array}{cc}
1 & i \\
1 & -i
\end{array}
\right] \right\rangle \cong {\bf A}_{4},
\]
where ${\bf A}_{4}$ is the alternative group of degree $4$ (see \cite{FaberarXiv1112.1999}). Since $5$ and $12$ are coprime, condition (b) in Theorem \ref{theorem} is satisfied. By direct computations, we have the following equalities:
\[ G_{1}\cdot P_{2}=\{ Q_{1}, Q_{\xi}, Q_{\xi^{2}}, Q_{\xi^{3}}, Q_{\xi^{4}}\},\]
\[ G_{2}\cdot P_{1}=\{ Q_{-i}, Q_{-1}, Q_{0}, Q_{1}, Q_{i}, Q_{\infty}\}, \]
\[ \mathbb{O}=(G_{1}\cdot P_{2})\cup (G_{2}\cdot P_{1})=\{ Q_{-i}, Q_{-1}, Q_{0}, Q_{1}, Q_{i}, Q_{\infty}, Q_{\xi}, Q_{\xi^{2}}, Q_{\xi^{3}}, Q_{\xi^{4}}\},\] 
\[ G_{1}(P_{2})=\left\{ \left[
\begin{array}{cc}
1 & 0 \\
0 & 1
\end{array}
\right] \right\},
\]
\[ G_{2}(P_{1})=\left\{ \left[
\begin{array}{cc}
1 & 0 \\
0 & 1
\end{array}
\right], \left[
\begin{array}{cc}
1 & 0 \\
0 & -1
\end{array}
\right]
\right\},
\]
\[ {\rm Bs}_{P_{1}}+\sum_{\sigma \in G_{1}}\sigma (P_{2})=(2Q_{-i}+2Q_{-1}+2Q_{0}+2Q_{i}+2Q_{\infty}+Q_{1})+(Q_{1}+Q_{\xi}+Q_{\xi^{2}}+Q_{\xi^{3}}+Q_{\xi^{4}}),\]
\[ {\rm Bs}_{P_{2}}+\sum_{\tau \in G_{2}}\tau (P_{1})=(Q_{\xi}+Q_{\xi^{2}}+Q_{\xi^{3}}+Q_{\xi^{4}})+(2Q_{-i}+2Q_{-1}+2Q_{0}+2Q_{1}+2Q_{i}+2Q_{\infty}).\]
Therefore, conditions (c) and (d) in Theorem \ref{theorem} are satisfied, and there exists a birational embedding $\varphi :\mathbb{P}^{1}\to \mathbb{P}^{2}$ of ${\rm deg}(\varphi (\mathbb{P}^{1}))=16$ such that $\varphi (P_{1})$, $\varphi (P_{2})$ are different non-smooth Galois points, $G_{\varphi (P_{1})}\cong \mathbb{Z}/5\mathbb{Z}$, $G_{\varphi (P_{2})}\cong {\bf A}_{4}$ and $\overline{\varphi (P_{1})\varphi (P_{2})}$ is not a tangent line at $\varphi (P_{2})$. By Proposition \ref{caution2} (1) and (2), $m_{\varphi (P_{1})}=11$ and $m_{\varphi (P_{2})}=4$. By the shape of the divisor ${\rm Bs}_{P_{1}}$, the second order at the point $Q_{1}$ is equal to $|G_{2}(P_{1})|-|G_{1}(P_{2})|=1$. Therefore, $\overline{\varphi (P_{1})\varphi (P_{2})}$ is the osculating line at $Q_{1}$ by Proposition \ref{caution2} (3) and (4), and $\overline{\varphi (P_{1})\varphi (P_{2})}$ is a tangent line at $\varphi (P_{1})$.

(3). Let $P_{1}=Q_{\infty}$, $P_{2}=Q_{\xi}$. We consider two groups:
\[ G_{1}=\left\langle \left[
\begin{array}{cc}
\xi & 0 \\
0 & 1
\end{array}
\right] \right\rangle, 
G_{2}=\left\langle \left\langle \left[
\begin{array}{cc}
1 & 0 \\
0 & -1
\end{array}
\right], \left[
\begin{array}{cc}
0 & 1 \\
1 & 0
\end{array}
\right] \right\rangle \rtimes \left\langle \left[
\begin{array}{cc}
1 & i \\
1 & -i
\end{array}
\right] \right\rangle, \left\langle \left[
\begin{array}{cc}
i & 0 \\
0 & 1
\end{array}
\right] \right\rangle \right\rangle.
\]
Obviously, $G_{1}\cong \mathbb{Z}/5\mathbb{Z}$, and the following equation is known:
\[
\left\langle \left[
\begin{array}{cc}
1 & 0 \\
0 & -1
\end{array}
\right], \left[
\begin{array}{cc}
0 & 1 \\
1 & 0
\end{array}
\right] \right\rangle \rtimes \left\langle \left[
\begin{array}{cc}
1 & i \\
1 & -i
\end{array}
\right] \right\rangle \triangleleft G_{2}\cong {\bf S}_{4},
\]
where ${\bf S}_{4}$ is the symmetric group of degree $4$ (see \cite{FaberarXiv1112.1999}). Since $5$ and $24$ are coprime, condition (b) in Theorem \ref{theorem} is satisfied. By direct computations, we have the following equalities:
\[ G_{1}\cdot P_{2}=\{ Q_{1}, Q_{\xi}, Q_{\xi^{2}}, Q_{\xi^{3}}, Q_{\xi^{4}}\},\]
\[ G_{2}\cdot P_{1}=\{ Q_{-i}, Q_{-1}, Q_{0}, Q_{1}, Q_{i}, Q_{\infty}\}, \]
\[ \mathbb{O}=(G_{1}\cdot P_{2})\cup (G_{2}\cdot P_{1})=\{ Q_{-i}, Q_{-1}, Q_{0}, Q_{1}, Q_{i}, Q_{\infty}, Q_{\xi}, Q_{\xi^{2}}, Q_{\xi^{3}}, Q_{\xi^{4}}\},\] 
\[ G_{1}(P_{2})=\left\{ \left[
\begin{array}{cc}
1 & 0 \\
0 & 1
\end{array}
\right] \right\},
\]
\[ G_{2}(P_{1})=\left\{ \left[
\begin{array}{cc}
1 & 0 \\
0 & 1
\end{array}
\right], \left[
\begin{array}{cc}
1 & 0 \\
0 & -1
\end{array}
\right], \left[
\begin{array}{cc}
i & 0 \\
0 & 1
\end{array}
\right], \left[
\begin{array}{cc}
i & 0 \\
0 & -1
\end{array}
\right]
\right\},
\]
\[ {\rm Bs}_{P_{1}}+\sum_{\sigma \in G_{1}}\sigma (P_{2})=(4Q_{-i}+4Q_{-1}+4Q_{0}+4Q_{i}+4Q_{\infty}+3Q_{1})+(Q_{1}+Q_{\xi}+Q_{\xi^{2}}+Q_{\xi^{3}}+Q_{\xi^{4}}),\]
\[ {\rm Bs}_{P_{2}}+\sum_{\tau \in G_{2}}\tau (P_{1})=(Q_{\xi}+Q_{\xi^{2}}+Q_{\xi^{3}}+Q_{\xi^{4}})+(4Q_{-i}+4Q_{-1}+4Q_{0}+4Q_{1}+4Q_{i}+4Q_{\infty}).\]
Therefore, conditions (c) and (d) in Theorem \ref{theorem} are satisfied, and there exists a birational embedding $\varphi :\mathbb{P}^{1}\to \mathbb{P}^{2}$ of ${\rm deg}(\varphi (\mathbb{P}^{1}))=28$ such that $\varphi (P_{1})$, $\varphi (P_{2})$ are different non-smooth Galois points, $G_{\varphi (P_{1})}\cong \mathbb{Z}/5\mathbb{Z}$, $G_{\varphi (P_{2})}\cong {\bf S}_{4}$, and $\overline{\varphi (P_{1})\varphi (P_{2})}$ is not a tangent line at $\varphi (P_{2})$. By Proposition \ref{caution2} (1) and (2), $m_{\varphi (P_{1})}=23$ and $m_{\varphi (P_{2})}=4$. By the shape of the divisor ${\rm Bs}_{P_{1}}$, the second order at the point $Q_{1}$ is equal to $|G_{2}(P_{1})|-|G_{1}(P_{2})|=3$. Therefore, $\overline{\varphi (P_{1})\varphi (P_{2})}$ is the osculating line at $Q_{1}$ by Proposition \ref{caution2} (3) and (4), and $\overline{\varphi (P_{1})\varphi (P_{2})}$ is a tangent line at $\varphi (P_{1})$.
\end{proof}

\begin{remark}\label{caution3}
By the calculation of the divisor in the above proof, we have the second or the third order at each point contained in ${\rm supp}(\varphi^{\ast}\overline{\varphi (P_{1})\varphi (P_{2})})$. We use the same notations in the proof of Theorem \ref{example}.
\begin{itemize}
\item[(a)] For the curve (1) in the proof of Theorem \ref{example}, the second order is equal to $2$ at each point $Q\in \mathbb{O}$.
\item[(b)] For the curve (2) in the proof of Theorem \ref{example}, the second order is equal to $2$ (resp. $1$) at each point $Q\in \mathbb{O}\setminus G_{1}\cdot P_{2}$ (resp. $Q\in G_{1}\cdot P_{2}$), and the third order is equal to $2$ at $Q_{1}$.
\item[(c)] For the curve (3) in the proof of Theorem \ref{example}, the second order is equal to $4$ (resp. 3, 1) at each point $Q\in \mathbb{O}\setminus G_{1}\cdot P_{2}$ (resp. at $Q_{1}$, at each point $Q\in (G_{1}\cdot P_{2})\setminus \{ Q_{1}\}$), and the third order is equal to $4$ at $Q_{1}$.
\end{itemize}
\end{remark}

\section*{{\bf Acknowledgments}}
The author would like to thank Professor Satoru Fukasawa for the helpful discussions and invaluable comments.

\end{document}